\documentclass[12pt]{amsart}
\usepackage{mathtools}

\DeclarePairedDelimiter{\floor}{\lfloor}{\rfloor}
\usepackage[T1]{fontenc}
\usepackage{lmodern}
\usepackage{microtype}
\usepackage{amsmath,amssymb,amsthm,mathtools}
\usepackage{enumitem}
\usepackage{aliascnt}
\usepackage{hyperref}
\usepackage[nameinlink,capitalize,noabbrev]{cleveref}

\hypersetup{
  colorlinks=true,
  linkcolor=blue,
  citecolor=blue,
  urlcolor=blue,
  pdftitle={Ballot lifts of multicomplexes and a real-rootedness problem of Brenti},
  pdfauthor={Jiaji Liu}
}

\newtheorem{theorem}{Theorem}[section]

\newaliascnt{proposition}{theorem}

\aliascntresetthe{proposition}

\newaliascnt{lemma}{theorem}

\aliascntresetthe{lemma}

\newaliascnt{corollary}{theorem}
\newtheorem{corollary}[corollary]{Corollary}
\aliascntresetthe{corollary}

\newaliascnt{conjecture}{theorem}

\aliascntresetthe{conjecture}

\newaliascnt{problem}{theorem}

\aliascntresetthe{problem}

\theoremstyle{definition}
\newaliascnt{definition}{theorem}

\aliascntresetthe{definition}

\newaliascnt{example}{theorem}

\aliascntresetthe{example}

\theoremstyle{remark}
\newaliascnt{remark}{theorem}
\newtheorem{remark}[remark]{Remark}
\aliascntresetthe{remark}

\crefname{theorem}{Theorem}{Theorems}
\Crefname{theorem}{Theorem}{Theorems}
\crefname{proposition}{Proposition}{Propositions}
\Crefname{proposition}{Proposition}{Propositions}
\crefname{lemma}{Lemma}{Lemmas}
\Crefname{lemma}{Lemma}{Lemmas}
\crefname{corollary}{Corollary}{Corollaries}
\Crefname{corollary}{Corollary}{Corollaries}
\crefname{conjecture}{Conjecture}{Conjectures}
\Crefname{conjecture}{Conjecture}{Conjectures}
\crefname{problem}{Problem}{Problems}
\Crefname{problem}{Problem}{Problems}
\crefname{definition}{Definition}{Definitions}
\Crefname{definition}{Definition}{Definitions}
\crefname{example}{Example}{Examples}
\Crefname{example}{Example}{Examples}
\crefname{remark}{Remark}{Remarks}
\Crefname{remark}{Remark}{Remarks}

\newcommand{\B}{\mathcal{B}}

\newcommand{\N}{\mathbb N}

\newcommand{\Lcal}{\mathcal L}
\newcommand{\degm}{\operatorname{deg}}

\begin{document}

\title{Palindromic real-rooted polynomials and $h$-vectors of polytopes}

\author{Peter L. Guo}
\address[Peter L. Guo]{Center for Combinatorics, LPMC, 
Nankai University, Tianjin 300071, P.R. China}
\email{lguo@nankai.edu.cn}

\author{Mingyang Kang}
\address[Mingyang Kang]{Center for Combinatorics, LPMC, 
Nankai University, Tianjin 300071, P.R. China}
\email{2120240005@mail.nankai.edu.cn}

\keywords{palindromic real-rooted polynomial, simplicial polytope, $h$-vector, $\gamma$-vector, multicomplex, $O$-sequence, $g$-theorem}

\subjclass[2020]{ 52B05,  05E45,    13F55}

\begin{abstract}
We prove that every monic palindromic 
real-rooted polynomial with nonnegative integer coefficients  is the $h$-polynomial of a simplicial
convex polytope. This resolves in the affirmative  a longstanding problem circulated   by  Brenti  since 2004. 
\end{abstract}

\maketitle

\section{Introduction}

A polynomial of degree $d$ with nonnegative integer coefficients
\[
  h(t)=\sum_{i=0}^{d} h_i t^i\in \N[t]
\]
 is \emph{palindromic} (also called  symmetric) if $h_i=h_{d-i}$ for every $0\leq i\leq d$.

We provide  a positive  answer   to the  following longstanding  problem  which  has been informally  circulated   by Brenti since 2004  and formally proposed as \cite[Problem 2.9]{Brenti2024}. 

\begin{theorem}\label{prob:brenti}
 Let $h(t)=\sum_{i=0}^{d} h_i t^i\in \N[t]$ be a  monic palindromic  polynomial with only real roots.  Then  there exists  a
simplicial convex polytope whose $h$-vector is $(h_0,h_1,\ldots,h_d)$. 
\end{theorem}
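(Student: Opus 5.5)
By the $g$-theorem (Billera--Lee sufficiency), it suffices to show that $h$ satisfies the $g$-conditions. We need $h_0=1$ and palindromicity, both given, together with the requirement that the $g$-vector $g_i=h_i-h_{i-1}$ for $0\le i\le \lfloor d/2\rfloor$ be an $M$-sequence ($O$-sequence). We will construct, for each such $h$, a multicomplex whose $f$-vector is $(g_0,g_1,\dots,g_{\lfloor d/2\rfloor})$. The argument runs in three steps.

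\emph{Step 1: rewrite in the $\gamma$-basis.} Since $h$ is palindromic with only real roots and nonnegative coefficients, all roots are negative. They pair as $r,1/r$, with any root $-1$ appearing separately. This gives a factorization
\[
h(t)=(1+t)^{d-2m}\prod_{j=1}^{m}\bigl(1+a_j t+t^2\bigr),\qquad a_j\ge 2 .
\]
Writing each factor as $(1+t)^2+(a_j-2)t$, we expand
\[
h(t)=\sum_{i=0}^{\lfloor d/2\rfloor}\gamma_i\, t^i(1+t)^{d-2i},
\]
where $\gamma_i=e_i(a_1-2,\dots,a_m-2)$. The $a_j$ need not be integers. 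However, the $\gamma_i$ are integers, because the $h_i$ are. Moreover, Newton's inequalities give $\gamma_i^2\ge \gamma_{i-1}\gamma_{i+1}\cdot\frac{(i+1)(m-i+1)}{i(m-i)}$, so $(\gamma_i)$ is an ultra log-concave, hence log-concave and nonnegative, integer sequence with $\gamma_0=1$.

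\emph{Step 2: reduce to a multicomplex statement for $\gamma$.} The $g$-vector of $t^i(1+t)^{d-2i}$ is $t^i$ times the $g$-vector of $(1+t)^{d-2i}$. The latter is $\binom{d-2i}{k}-\binom{d-2i}{k-1}$ in degree $k$, which is the ballot-number sequence. A natural way to get a multicomplex for $g$ is therefore the following. Take a multicomplex $\Gamma$ with $f$-vector $(\gamma_i)$ whose faces are generated in a controlled way. Then ``lift'' each degree-$i$ monomial of $\Gamma$ by a ballot-type family of $\binom{d-2i}{k}-\binom{d-2i}{k-1}$ new monomials in degree $i+k$. This must be done compatibly, so that the union stays closed under division and the $f$-vector is exactly $g$.

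I would first realize $(\gamma_i)$ itself as an $O$-sequence. This needs an argument, since log-concavity alone does not imply Macaulay's condition. The stronger fact to use is that $\gamma(t)$ is real-rooted with nonnegative integer coefficients and constant term $1$. For such $\gamma$, I would show $\gamma_{i+1}\le\gamma_i^{\langle i\rangle}$ by combining Newton's inequality with the bound $\gamma_1\ge \gamma_i^{1/i}$-type estimates, using the fact that $\gamma_{i+1}\le \gamma_i\gamma_1/(i+1)$ roughly, and comparing with Macaulay's bound.

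\emph{Step 3: the lifting (main obstacle).} Given a multicomplex $\Gamma$ on variables $x_1,\dots,x_n$, adjoin a new variable $y$. For a monomial $u\in\Gamma$ of degree $i$, include $u\,y^{k}$ and suitable products with the $x$'s, arranged so that the count in each degree is the ballot number and order ideal closure holds. The worry is that a face of degree $i$ needs $d-2i$ ``slack'' while its divisors have more; the ballot numbers are decreasing in $d-2i$ in the right range, which helps. Closure under division is the crux, together with verifying Macaulay compatibility when $\gamma$ is merely an $O$-sequence rather than a compressed one. I would first try the case where $\Gamma$ is a compressed (lex-segment) multicomplex. There I would verify the downward closure degree by degree via Pascal's identity on the ballot numbers, and then conclude via the $g$-theorem.
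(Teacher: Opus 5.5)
Your architecture is the paper's: the $g$-theorem, expansion in the basis $t^j(1+t)^{d-2j}$, showing $\gamma(h)$ is an $O$-sequence, then lifting each $\gamma_j$ by the ballot numbers. Your Step~1 is a correct self-contained substitute for the appeal to Gal. But the two steps that carry the real content are not done.

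In Step~2, the claim that a real-rooted $\gamma_h(t)\in\N[t]$ with constant term $1$ has an $O$-sequence of coefficients is exactly the Bell--Skandera theorem (Theorem~\ref{thm:bell-skandera}). That is a nontrivial result, and you neither cite nor prove it. The estimate you rely on cannot deliver it. The inequality $\gamma_{i+1}\le\gamma_1\gamma_i/(i+1)$ is much weaker than Macaulay's bound when $\gamma_i$ is small relative to $\gamma_1$: for $\gamma_1=100$, $\gamma_2=1$ it allows $\gamma_3\le 33$, while $1^{\langle 2\rangle}=1$. A proof has to play the full chain of Newton/Maclaurin inequalities against the binomial representation of $\gamma_i$. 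Simply citing the theorem closes this step.

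In Step~3 the missing idea is a \emph{nested} family of complexes in \emph{fresh} variables realizing the ballot numbers. The paper takes $\mathcal{B}_n=\{x_{s_1}\cdots x_{s_k}: s_1<\cdots<s_k\le n,\ s_r\ge 2r\}$. It satisfies $f_k(\mathcal{B}_n)=\binom{n}{k}-\binom{n}{k-1}$ and $\mathcal{B}_n\subseteq\mathcal{B}_{n+1}$. The paper then sets
\[
\Lcal_d(M)=\{uv:\ u\in M,\ v\in\mathcal{B}_{d-2\deg u}\},
\]
with the $x$'s disjoint from the variables of $M$. Closure under division is then one line: if $u'v'\mid uv$ then $\deg u'\le\deg u$, so $v'\in\mathcal{B}_{d-2\deg u}\subseteq\mathcal{B}_{d-2\deg u'}$. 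Unique factorization gives $f_i(\Lcal_d(M))=\sum_j\gamma_j\bigl(\binom{d-2j}{i-j}-\binom{d-2j}{i-j-1}\bigr)=g_i$.

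Your version with a single new variable $y$ yields only one monomial $uy^k$ per pair $(u,k)$, far short of the ballot number. The extra monomials you would build from the variables of $\Gamma$ collide with lifts of other faces of $\Gamma$. So the count is no longer a convolution, and closure must be checked ad hoc. What is needed is not numerical monotonicity of the ballot numbers in $d-2i$, but containment of the complexes themselves. Once you have that, your worry about compressed $\Gamma$ and ``Macaulay compatibility'' disappears: any multicomplex with $f$-vector $\gamma$ works.
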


We recall the $h$-vector of a simplicial convex polytope. 
A convex polytope is the convex hull of finitely many points  in a  Euclidean space.  Let $P$ be a simplicial convex polytope of dimension $d$, that is,   every proper  face of $P$ is a simplex. 
The boundary $\Delta(P)$ of $P$ is a simplicial complex consisting of all   proper faces of $P$. Here the empty set is considered as a proper face of dimension $-1$. For $0\leq i\leq d$, let $f_i(\Delta(P))$ denote the number of faces in $\Delta(P)$ with $i$ vertices\footnote{
In the literature, 
$f_i$
  usually denotes the number of 
$i$-dimensional faces. We adopt this convention to match the notation we will use later for multicomplexes.}, or equivalently, the number of faces of dimension $i-1$. 
The $h$-vector $h(P)=(h_0, h_1,\ldots, h_d)$ of $P$ is determined by 
\[
h_0+h_1t+\cdots+h_dt^d=\sum_{i=0}^d f_{i}(\Delta(P))\,t^i(1-t)^{d-i},
\]
which is called the $h$-polynomial of $P$.

\begin{remark}
The converse of   Theorem \ref{prob:brenti}  is not true. That is, the $h$-polynomial of a simplicial polytope $P$ is not necessarily  a monic palindromic polynomial with only real roots. For example, let $P$ be the simplex of dimension $d=2$. Then $f(\Delta(P))=(1, 3,3)$ and $h(P)=(1,1,1)$. It is clear that $1+t+t^2$ is not real-rooted. 
\end{remark}

We delineate the principal strategy   that will 
be employed in the proof of   \cref{prob:brenti}. 
The first result we use is  the famous $g$-theorem which gives a characterization of 
$h$-vectors  in terms of $O$-sequences. 
A finite nonempty set $M$ of monomials
is a \emph{multicomplex}  (also called order ideal of monomials  \cite[Section 2]{Stanley-2}) if whenever   
$ u\in M$ and $v$ divides  $u$, then   $ v\in M$.  
Clearly, a simplicial  complex corresponds to the case of squarefree monomials. 
For $i\geq 0$, write 
\[M_i=\{u\in M:\degm u=i\}.\] 
Let  $f_i(M)=|M_i|$.   A multicomplex is   of degree $s$ 
if the maximum of degrees of its monomials  is equal to $s$, and is   pure  if all maximal monomials have the same degree. 
A  sequence $(a_0,a_1,\ldots,a_s)$ is an  \emph{$O$-sequence} (also called an $M$-vector \cite[Page 56]{Stanley1996}) if there is a   multicomplex $M$ such that
$f_i(M)=a_i$ for  $0\leq i\leq s$, and is called a pure $O$-sequence if the  multicomplex $M$ is pure. 

Let  $h=(h_0,\ldots, h_d)$ be a palindromic integer vector. The associated $g$-vector  $g(h)=(g_0,\ldots, g_{\left\lfloor d/2\right\rfloor})$  is defined   by letting
\[
  g_0=h_0,
  \qquad
  g_i=h_i-h_{i-1}
  \quad \text{ for $1\leq i\leq \left\lfloor d/2\right\rfloor$
}.
\]
Billera and Lee \cite{BilleraLee1981}  and Stanley \cite{Stanley1980} respectively proved the sufficiency and necessity of the following deep  relation conjectured by McMullen \cite{M-1, M-2} (which is now known as the $g$-theorem): a palindromic integer  vector   $h$  is the $h$-vector of a
simplicial $d$-dimensional convex polytope if and only if $g_0=1$, $g_1\geq 0$ and $0\leq g_{i+1}\leq g_{i}^{<i>}$ for $1\leq i\leq \left\lfloor d/2\right\rfloor-1$. 
Stanley \cite[Section 2.2]{Stanley-2} (see also \cite[Page 183]{BL80})   showed that the condition on the right-hand side is equivalent to the statement  that $g(h)$ is an $O$-sequence. Collecting the above, we have 

\begin{theorem}[\text{\cite{BilleraLee1981,Stanley1980,Stanley-2}}]\label{gtheorem}
Let  $h=(h_0,h_1,\ldots, h_d)$ be a palindromic integer  vector. Then $h$   is the $h$-vector of a
simplicial $d$-dimensional convex polytope if and only if $g(h)$ is an    $O$-sequence. 
\end{theorem}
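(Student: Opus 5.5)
The statement of \cref{gtheorem} is the $g$-theorem (sufficiency by Billera--Lee, necessity by Stanley) combined with a reformulation of its numerical conditions. So the plan is to assemble it rather than prove the hard geometric and algebraic parts from scratch. I would do this in two steps.

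\textbf{Step 1: the $g$-theorem in Macaulay form.} Take the $g$-theorem exactly as stated above: a palindromic integer vector $h$ is the $h$-vector of a simplicial $d$-polytope if and only if
\[
g_0=1,\qquad g_1\geq 0,\qquad 0\leq g_{i+1}\leq g_i^{<i>}\quad\text{for } 1\leq i\leq \lfloor d/2\rfloor-1 .
\]
For the record, the two directions come from different sources, which I would cite rather than reprove.
\begin{itemize}
\item Sufficiency is \cite{BilleraLee1981}. Given such a $g$, one builds a shellable ball inside the boundary of a cyclic polytope $C(n,d+1)$, with $n=g_1+d+1$, using a multicomplex realizing $g$. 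One then checks that the visible part of the boundary gives a simplicial $d$-polytope with the prescribed $h$-vector.
\item Necessity is \cite{Stanley1980}. One may assume rational vertices, so the toric variety of $P$ is defined. Hard Lefschetz for its cohomology gives a Lefschetz element $\omega$, and multiplication by $\omega$ is injective in degrees below $d/2$. Hence $\bigoplus_{i\le \lfloor d/2\rfloor} H^{2i}/\omega H^{2i-2}$ is a standard graded algebra with Hilbert function $g(h)$.
\end{itemize}

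\textbf{Step 2: the numerical conditions say exactly that $g(h)$ is an $O$-sequence.} This is Macaulay's theorem, in the form recorded in \cite[Section 2.2]{Stanley-2}. A sequence $(a_0,\dots,a_s)$ is the $f$-vector of a multicomplex if and only if
\[
a_0=1,\qquad 0\leq a_{i+1}\leq a_i^{<i>}\quad\text{for } i\geq 1 .
\]
Here $a_1\ge 0$ is arbitrary: the variables can be any $a_1$ letters.

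One direction is the Clements--Lindstr\"om / Macaulay compression argument. Replacing each $M_i$ by the first $|M_i|$ monomials of degree $i$ in reverse lexicographic order preserves the order-ideal property. In this compressed form the shadow of an initial segment of size $a$ has size $a^{<i>}$, computed from the $i$-binomial expansion of $a$.

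For the converse, given numbers satisfying the inequalities, take lex (or revlex) initial segments in each degree. The bound $a_{i+1}\le a_i^{<i>}$ says precisely that the divisors of the chosen degree-$(i+1)$ segment lie in the chosen degree-$i$ segment, so the union is a multicomplex.

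Applying this with $a=g(h)$ and $s=\lfloor d/2\rfloor$ identifies the condition in Step 1 with ``$g(h)$ is an $O$-sequence'', which proves \cref{gtheorem}.

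\textbf{Main obstacle.} The only genuinely hard inputs are the two halves of the $g$-theorem, which are imported. Within the assembly itself, the delicate point is the compression lemma in Step 2: showing that the shadow of a degree-$i$ revlex initial segment of size $a$ is again an initial segment, and that its size is exactly $a^{<i>}$, the minimum over all sets of $a$ monomials of degree $i$. I would take this from Macaulay's theorem as presented by Stanley rather than reprove it.
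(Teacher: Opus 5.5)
Your proposal follows the paper's route: the paper also cites Billera--Lee and Stanley for the $g$-theorem in its Macaulay-inequality form, and then cites Stanley's Section 2.2 for the fact that these inequalities say exactly that $g(h)$ is an $O$-sequence. Because you import Macaulay's theorem rather than prove it, the inaccuracies in your side sketch are not load-bearing, but two should be corrected: with the usual convention $x_1>x_2>\cdots$, lex initial segments do not in general form a multicomplex (for $(1,3,4,5)$ they put $x_1x_2x_3$ in degree $3$ without $x_2x_3$ in degree $2$), only revlex initial segments do; and $a^{<i>}$ is not the size of a minimal shadow but the maximal number of degree-$(i+1)$ monomials whose degree-$i$ divisors all lie in a given set of $a$ monomials.
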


The second  result that we need  is due to Bell and Skandera
\cite{BellSkandera2007}.

\begin{theorem}[\text{\cite[(3.2) and Theorem 3.6]{BellSkandera2007}}]
\label{thm:bell-skandera}
Let
\[
  A(t)=1+a_1t+\cdots+a_rt^r\in\N[t]
\]
have only real zeros.  Then the coefficient sequence $(1,a_1,\ldots,a_r)$ is an $O$-sequence. 
\end{theorem}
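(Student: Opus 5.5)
The plan is to verify Macaulay's criterion directly: $(1,a_1,\ldots,a_r)$ is an $O$-sequence if and only if $a_{k+1}\le a_k^{<k>}$ for every $k\ge 1$. Newton's inequalities will supply the needed ratio bounds, and a continuous (Lovász-type) form of the Macaulay pseudopower will turn those ratio bounds into the discrete condition.

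\textbf{Step 1 (Newton).} Since $A$ has nonnegative coefficients and only real zeros, all its zeros are nonpositive. Hence we can write $A(t)=\prod_{i=1}^r(1+\rho_i t)$ with $\rho_i\ge 0$, so that $a_k=e_k(\rho_1,\ldots,\rho_r)$. Newton's inequalities give
\[
a_k^2\ \ge\ a_{k-1}a_{k+1}\Bigl(1+\tfrac1k\Bigr)\Bigl(1+\tfrac1{r-k}\Bigr),
\]
and in particular $(k+1)a_{k+1}a_{k-1}\le k\,a_k^2$. Real-rootedness also implies there are no internal zeros, so the $a_k$ are positive up to degree $r$. Setting $y_k=(k+1)a_{k+1}/a_k-k$, the displayed inequality becomes $(k+1)a_{k+1}/a_k\le k\,a_k/a_{k-1}$. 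This gives
\[
y_k\ \le\ k\,\frac{a_k}{a_{k-1}}-k\ \le\ k\,\frac{a_k}{a_{k-1}}-(k-1)\ =\ y_{k-1},
\]
so $(y_k)$ is nonincreasing, starting from $y_0=a_1$.

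\textbf{Step 2 (lower bound on $a_k$).} By telescoping, $a_k=\prod_{j=0}^{k-1}\frac{a_{j+1}}{a_j}=\prod_{j=0}^{k-1}\frac{y_j+j}{j+1}$. Monotonicity gives $y_j\ge y_k$ for $j\le k$, hence
\[
a_k\ \ge\ \prod_{j=0}^{k-1}\frac{y_k+j}{j+1}\ =\ \binom{y_k+k-1}{k},
\]
read as a generalized binomial coefficient. Define the real number $x_k$ by $a_k=\binom{x_k+k-1}{k}$. Since $x\mapsto\binom{x+k-1}{k}$ is increasing for $x\ge 0$, this yields $x_k\ge y_k$. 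If $y_k<0$ there is nothing to arrange, because then $a_{k+1}<a_k\cdot k/(k+1)$ and the target bound is trivial.

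\textbf{Step 3 (Macaulay).} From the definition of $y_k$,
\[
a_{k+1}=a_k\,\frac{y_k+k}{k+1}\ \le\ a_k\,\frac{x_k+k}{k+1}=\binom{x_k+k}{k+1}.
\]
It therefore suffices to prove the continuous multicomplex Macaulay bound: if $a=\binom{x+k-1}{k}$ with $x\ge 1$ real, then $a^{<k>}\ge\binom{x+k}{k+1}$. This is the analogue for multicomplexes of Lovász's version of Kruskal–Katona. I expect this to be the main obstacle.

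To prove it, I would write the $k$-th Macaulay representation $a=\sum_i\binom{n_i+i-1}{i}$, so that $a^{<k>}=\sum_i\binom{n_i+i}{i+1}$. Then I would argue by induction on $k$, peeling off the top term exactly as in the standard proof of Lovász's inequality. An alternative is to translate to the squarefree setting via the shift $\binom{x+k-1}{k}$ versus $\binom{X}{k}$ with $X=x+k-1$, and then invoke the Lovász form of Kruskal–Katona for the upper shadow.

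Once this inequality is in hand, Steps 1–3 give $a_{k+1}\le a_k^{<k>}$ for all $k$. By Macaulay's theorem, $(1,a_1,\ldots,a_r)$ is then an $O$-sequence.
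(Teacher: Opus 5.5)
The paper gives no proof of this statement; it only cites Bell and Skandera. Your attempt has a genuine gap, in two layers.

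\textbf{Step 3 relies on a false lemma.} The ``continuous Macaulay bound'' $a^{<k>}\ge\binom{x+k}{k+1}$ for $a=\binom{x+k-1}{k}$ does not hold.
\begin{itemize}
\item Take $k=2$ and $a=4=\binom{3}{2}+\binom{1}{1}$. Then $4^{<2>}=\binom{4}{3}+\binom{2}{2}=5$, while $x(x+1)=8$ gives $\binom{x+2}{3}=\tfrac{4(x+2)}{3}\approx 5.83$.
\item Integrality of $a_{k+1}$ does not rescue it. For $a=7=\binom{4}{2}+\binom{1}{1}$ one has $7^{<2>}=\binom{5}{3}+\binom{2}{2}=11$, but $\binom{x+2}{3}=\tfrac{7(x+2)}{3}\approx 12.3$.
\end{itemize}
Lov\'asz's theorem actually gives the reverse inequality. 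Put $X=x+k-1$ and let $a=\sum_i\binom{m_i}{i}$ be the $k$-binomial representation. Then $a^{<k>}=\sum_i\binom{m_i+1}{i+1}=a+a^{(k)}$, where $a^{(k)}=\sum_i\binom{m_i}{i+1}$ is the Kruskal--Katona bound. Lov\'asz yields $a^{(k)}\le\binom{X}{k+1}$, hence $a^{<k>}\le\binom{X+1}{k+1}=\binom{x+k}{k+1}$, with equality when $x$ is an integer. So neither your sketched induction nor the passage to the squarefree setting can produce the inequality you need.

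\textbf{Step 1 discards the essential information.} Newton's inequality $(k+1)a_{k+1}/a_k\le k\,a_k/a_{k-1}$ says exactly $y_k\le y_{k-1}-1$, and you threw away the $-1$. The weakened condition ``$y_k$ nonincreasing'' is satisfied, with $y_k$ constant, by the coefficients $\binom{x+j-1}{j}$ of $(1-t)^{-x}$, which is Macaulay-extremal growth. It does not force the $O$-sequence property even for integer sequences. Consider $(1,10,55,219,708)$:
\begin{itemize}
\item $y_0=y_1=10$, $y_2=\tfrac{657}{55}-2\approx 9.945$ and $y_3=\tfrac{2832}{219}-3\approx 9.932$, so the $y_k$ are nonincreasing.
\item Yet $219=\binom{11}{3}+\binom{10}{2}+\binom{9}{1}$, so $219^{<3>}=\binom{12}{4}+\binom{11}{3}+\binom{10}{2}=705<708$.
\end{itemize}
So no lemma, true or false, can finish from what Steps 1--2 retain.

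\textbf{What a repair would need.} If you keep the $-1$, you get $y_j\ge y_k+(k-j)$ for $j\le k$. Hence $a_k\ge (y_k+k)^k/k!$ and
\[
a_{k+1}\le \frac{a_k\,(k!\,a_k)^{1/k}}{k+1},
\]
equivalently $(k!\,a_k)^{1/k}$ is nonincreasing. The extremal model is now $e^{ct}$ rather than $(1-t)^{-x}$. What remains is to prove, from the Macaulay representation of $a_k$, that this bound (or its floor) does not exceed $a_k^{<k>}$. That comparison is where the real work lies, and your proposal does not address it.
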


We next need the real-rootedness   concerning  the $\gamma$-polynomial of a real-rooted palindromic polynomial. 
It is known that a palindromic polynomial $  h(t)=\sum_{i=0}^{d} h_i t^i$ has a unique linear  expression  in terms of the basis $\{  t^j(1+t)^{d-2j}\colon  0\leq j\leq \lfloor d/2\rfloor\}$:
\begin{equation*}
  h(t)=\sum_{j=0}^{\lfloor d/2\rfloor}
  \gamma_j\, t^j(1+t)^{d-2j}.
\end{equation*}
The vector $\gamma(h)=(\gamma_0,\gamma_1,\ldots, \gamma_{\lfloor d/2\rfloor})$
is called the $\gamma$-vector of $h(t)$. Write 
\[
  \gamma_h(t)=\sum_{j=0}^{\lfloor d/2\rfloor}\gamma_j\, t^j
\]
for the $\gamma$-polynomial associated to $h(t)$. 

The following theorem can be directly deduced from Gal \cite{Gal}. 

\begin{theorem}\label{prop:real-rooted-gamma}
 Let $h(t)=\sum_{i=0}^{d} h_i t^i\in \N[t]$ be a polynomial satisfying the conditions  in Theorem  \ref{prob:brenti}.  
Then $\gamma_h(t)$ 
belongs to $\N[t]$, has constant term $1$, and has only real negative
zeros.
\end{theorem}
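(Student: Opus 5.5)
The plan is to use the substitution that links $h(t)$ to $\gamma_h(t)$. Write $d=\deg h$; since $h$ is monic and palindromic, $h_0=h_d=1$. For $t\neq 0$, divide by $t^{d/2}$ and set $x=t+2+t^{-1}=(1+t)^2/t$. Then
\[
\frac{h(t)}{t^{d/2}}=\sum_{j}\gamma_j\left(\frac{(1+t)^2}{t}\right)^{d/2-j}.
\]
This shows that $h(t)=(1+t)^{d-2\lfloor d/2\rfloor}\,t^{\lfloor d/2\rfloor}\,\Gamma\bigl((1+t)^2/t\bigr)$, where $\Gamma(x)=\sum_j \gamma_j x^{\lfloor d/2\rfloor-j}$ is the reversal of $\gamma_h$. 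Comparing the coefficients of $t^0$ on both sides gives $\gamma_0=h_0=1$. This settles the constant term.

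Next we locate the roots. Since $h$ has nonnegative coefficients and $h(0)=1$, all zeros of $h$ are real and negative. Palindromicity makes the multiset of zeros invariant under $t\mapsto 1/t$. Hence the zeros different from $-1$ pair up as $\{r,1/r\}$ with $r<0$, and if $d$ is odd there is an extra root at $-1$. For each such pair, $(t-r)(t-1/r)=t^2-(r+1/r)t+1=t\bigl(x-(r+1/r+2)\bigr)$, with $x$ as above. Pairs of roots at $-1$ give the factor $(1+t)^2=t\cdot x$.

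Grouping the factors of $h$ in this way gives $\Gamma(x)=\prod_k (x-c_k)$ with $c_k=r_k+1/r_k+2\le 0$, because $r+1/r\le -2$ for $r<0$. So every zero of $\Gamma$ is real and nonpositive. The zeros of $\gamma_h(t)=t^{\lfloor d/2\rfloor}\Gamma(1/t)$ are then the reciprocals $1/c_k$ of the nonzero $c_k$, which are real and negative. If some $c_k=0$, it simply lowers the degree of $\gamma_h$. Since $\gamma_0=1$, we can write $\gamma_h(t)=\prod_{c_k\ne 0}(1-c_k t)$, and each factor has positive coefficients. Therefore $\gamma_h\in\mathbb{R}_{\ge0}[t]$.

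The remaining point, which I expect to be the only subtle step, is integrality of the $\gamma_j$. I would get it from the triangular change of basis. The coefficient of $t^i$ in $t^j(1+t)^{d-2j}$ is $\binom{d-2j}{i-j}$, which equals $1$ when $i=j$ and $0$ when $i<j$. Solving $h_i=\sum_{j\le i}\gamma_j\binom{d-2j}{i-j}$ recursively for $i=0,1,\dots$ therefore expresses each $\gamma_i$ as $h_i$ minus an integer combination of $\gamma_0,\dots,\gamma_{i-1}$. By induction all $\gamma_i$ are integers. Combined with nonnegativity, this gives $\gamma_h\in\N[t]$, and together with the root analysis above this proves the statement.
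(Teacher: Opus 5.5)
Your proof is correct. It has the same logical skeleton as the paper's: first $\gamma_0=1$ and integrality, then real negative zeros of $\gamma_h$, then nonnegativity of its coefficients. The difference is that the paper delegates every substantive step to Gal. It cites Proposition 2.1.1 and Remark 2.1.3 there for $\gamma_h\in\mathbb{Z}[t]$ with constant term $1$, and Remark 3.1.1 for the passage of real negative zeros from $h$ to $\gamma_h$.

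You prove these facts directly:
\begin{itemize}
\item Integrality comes from the unitriangularity of the basis $\{t^j(1+t)^{d-2j}\}$ with respect to $\{t^i\}$.
\item The zero statement comes from pairing the zeros $r\leftrightarrow 1/r$ of $h$ and using $(t-r)(t-1/r)=t\,(x-c)$, where $x=(1+t)^2/t$ and $c=r+1/r+2\le 0$. This gives the explicit factorization $\gamma_h(t)=\prod_k(1-c_kt)$.
\end{itemize}

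What your version buys is self-containedness and an explicit dictionary between the zeros of $h$ and those of $\gamma_h$. In particular, double zeros of $h$ at $-1$ correspond to $c_k=0$ and lower the degree of $\gamma_h$. It also shows exactly where monicity enters: $h_0=h_d=1$ gives $\gamma_0=1$ and lets you write $h(t)=\prod(t-\rho)$. The paper's version buys brevity.

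Two minor presentation points:
\begin{itemize}
\item Dividing by $t^{d/2}$ is only heuristic when $d$ is odd. The identity $h(t)=(1+t)^{d-2\lfloor d/2\rfloor}t^{\lfloor d/2\rfloor}\Gamma\bigl((1+t)^2/t\bigr)$ should be checked term by term, which is immediate.
\item Deducing $\Gamma(x)=\prod_k(x-c_k)$ by comparing the two factorizations of $h$ uses that $(1+t)^2/t$ takes infinitely many values for $t\notin\{0,-1\}$.
\end{itemize}
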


\begin{proof}
Since $h(t)$ is a monic palindromic polynomial with  integer coefficients, by \cite[Proposition 2.1.1]{Gal} and \cite[Remark 2.1.3]{Gal},   $\gamma_h(t)$ is a polynomial with  integer coefficients and with   constant term equal to $1$. 

Since all coefficients of $h(t)$
are nonnegative and $h(0)=1$,  all the  real zeros of $h(t)$ are  negative. 
By  \cite[Remark 3.1.1]{Gal}, $\gamma_h(t)$ has only real  negative zeros. This implies that $\gamma_h(t)$ has nonnegative coefficients, thus belonging to  $\N[t]$.
\end{proof}

Combining Theorem \ref{prop:real-rooted-gamma} with Theorem
\ref{thm:bell-skandera} yields that  

\begin{corollary}\label{skci-09}
  Let $h(t)=\sum_{i=0}^{d} h_i t^i\in \N[t]$ be a polynomial  satisfying the conditions  in Theorem  \ref{prob:brenti}. Then $\gamma(h)$ is an $O$-sequence.    
\end{corollary}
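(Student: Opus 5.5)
The plan is to read off Corollary~\ref{skci-09} directly from Theorem~\ref{prop:real-rooted-gamma} and Theorem~\ref{thm:bell-skandera}, with a little bookkeeping about trailing zeros.

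First apply Theorem~\ref{prop:real-rooted-gamma} to $h(t)$. It gives
\[
\gamma_h(t)=1+\gamma_1t+\cdots+\gamma_{\lfloor d/2\rfloor}t^{\lfloor d/2\rfloor}\in\N[t],
\]
with constant term $1$ and only real (negative) zeros. Let $r$ be the largest index with $\gamma_r\neq 0$; since $\gamma_0=1$, such an $r$ exists and $r\ge 0$. Then $A(t)=\gamma_h(t)=1+\gamma_1t+\cdots+\gamma_rt^r$ has exactly the form required in Theorem~\ref{thm:bell-skandera}. Here the coefficients are nonnegative integers and all zeros are real. Theorem~\ref{thm:bell-skandera} then says that $(1,\gamma_1,\ldots,\gamma_r)$ is an $O$-sequence, realized by some multicomplex $M$ of degree $r$.

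The only point needing care is the possibility that $\gamma_{r+1}=\cdots=\gamma_{\lfloor d/2\rfloor}=0$. In that case $\gamma(h)$ is the sequence $(1,\gamma_1,\ldots,\gamma_r)$ padded with zeros. The same multicomplex $M$ still realizes the padded vector, because $f_i(M)=|M_i|=0$ for every $i>r$. So $\gamma(h)$ is itself an $O$-sequence, provided $O$-sequences are read with trailing zeros allowed, which is exactly what the definition via $f_i(M)$ gives. If one insists on the literal convention "$(a_0,\ldots,a_s)$ with $M$ of degree $s$", one notes instead that the statement concerns the nonzero part, and the zero tail imposes no additional condition.

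There is no real obstacle here. The substantive inputs, namely integrality, nonnegativity and real-rootedness of $\gamma_h$ from Gal's work, and the Bell--Skandera theorem, are already packaged in the two cited results. The one subtle check is that $\gamma_h$ is a genuine polynomial in $\N[t]$ with constant term exactly $1$, so that the hypotheses of Theorem~\ref{thm:bell-skandera} hold verbatim, and this is supplied by Theorem~\ref{prop:real-rooted-gamma}.
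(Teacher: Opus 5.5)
Your proposal is correct and is exactly the paper's argument: the paper obtains the corollary by feeding the output of Theorem~\ref{prop:real-rooted-gamma} ($\gamma_h\in\N[t]$ with constant term $1$ and only real zeros) into Theorem~\ref{thm:bell-skandera}. Your trailing-zero bookkeeping is harmless but unnecessary: the paper's definition of an $O$-sequence does not fix the degree of $M$, and Theorem~\ref{thm:bell-skandera} does not require $a_r\neq 0$, so you can apply it directly with $r=\lfloor d/2\rfloor$ (truncating $M$ to degrees at most $r$ if you want no monomials beyond the sequence).
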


Lastly, we establish the following result which allows us to complete the proof of Theorem  \ref{prob:brenti}.

\begin{theorem}\label{thm:intro-lift}
Let $h(t)=\sum_{i=0}^d h_i t^i$ be a palindromic polynomial.    If
$\gamma(h)$ is an $O$-sequence, then
$g(h)$ is a pure $O$-sequence.
\end{theorem}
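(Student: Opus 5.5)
The plan is to find an explicit formula for $g(h)$ in terms of $\gamma(h)$, and then to \emph{lift} a multicomplex realizing $\gamma(h)$ to a pure multicomplex realizing $g(h)$. Put $s=\lfloor d/2\rfloor$. Expanding $h(t)=\sum_j\gamma_j t^j(1+t)^{d-2j}$ gives $h_i=\sum_j \gamma_j\binom{d-2j}{i-j}$. Hence for $0\le i\le s$
\[
g_i=\sum_{j=0}^{i}\gamma_j\, B(d-2j,\,i-j),\qquad B(n,k):=\binom{n}{k}-\binom{n}{k-1},
\]
where $B(n,k)$ is a ballot number. Since $i\le s$ forces $k=i-j\le (d-2j)/2=n/2$, every ballot number occurring here is nonnegative.

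\textbf{Step 1 (a combinatorial model for $B(n,k)$).} I will use the standard fact that $B(n,k)$ counts the $k$-subsets $\{a_1<\dots<a_k\}\subseteq[n]$ with $a_r\ge 2r$ for all $r$. This is the usual reflection/lattice-path argument, and it holds for $k\le n/2$. Call such sets \emph{ballot sets}. This family is closed under taking subsets: deleting an element lowers the rank of each later element by one while leaving its value unchanged, so the inequalities $a_r\ge 2r$ persist.

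\textbf{Step 2 (the lift).} Let $\Gamma$ be a multicomplex in variables $x_1,\dots,x_n$ with $f_j(\Gamma)=\gamma_j$; it exists by hypothesis. Introduce new variables $y_1,\dots,y_d$. Define $M$ to be the set of monomials $u\,y_S$, where $y_S=\prod_{a\in S}y_a$, subject to three conditions:
\begin{itemize}
\item $u\in\Gamma$ with $\deg u=j$;
\item $S=\{a_1<\dots<a_k\}\subseteq\{2j+1,\dots,d\}$ with $a_r\ge 2j+2r$ for all $r$;
\item $j+k\le s$.
\end{itemize}
After the shift $a\mapsto a-2j$, the admissible sets $S$ for a fixed $u$ of degree $j$ are exactly the ballot $k$-subsets of $[d-2j]$. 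So
\[
f_i(M)=\sum_j\gamma_j B(d-2j,i-j)=g_i .
\]
To see that $M$ is a multicomplex, consider the two kinds of division. Dividing by some $y_a$ is handled by Step 1 applied to the shifted set. Dividing $u$ by some $x_\ell$ replaces $j$ by $j-1$. This only weakens the constraints, since $a_r\ge 2j+2r>2(j-1)+2r$ and the window $\{2j-1,\dots,d\}$ is larger. The degree bound $j+k\le s$ is also preserved under both divisions.

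\textbf{Step 3 (purity), which I expect to be the main obstacle.} I need every $u\,y_S$ with $j+k<s$ to be properly divisible by some element of $M$ of degree $s$. The first thing I will try is to keep $u$ fixed and enlarge $S$ by adding the largest element $m\le d$ not already in $S$. The elements of $S$ exceeding $m$ form the consecutive block $m+1,\dots,d$. Ranks below $m$ are unchanged, and each element of this block has its rank raised by one. The binding inequality is then $d-t\ge 2j+2(k+1-t)$ for $t\ge 0$, which follows from $j+k+1\le s\le d/2$. The new element $m$ itself satisfies the same bound, and $m\ge 2j+1$ because at most $k<(d-2j)/2$ slots of the window are occupied. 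Iterating this step reaches degree $s$, so every maximal monomial of $M$ has degree $s$ and $M$ is pure.

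Finally, $M_0=\{1\}$, so $f(M)=g(h)$ and $g(h)$ is a pure $O$-sequence. The delicate points are getting the shift $2j$ exactly right, so that the sets are simultaneously closed under division in both $u$ and $S$, and checking the rank bookkeeping in the purity step.
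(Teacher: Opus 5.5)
Your proposal is correct and is essentially the paper's proof. Like the paper, you lift a multicomplex realizing $\gamma(h)$ by attaching to each $u$ of degree $j$ a ballot complex on $d-2j$ letters, and you count via $g_i=\sum_{j}\gamma_j\bigl(\binom{d-2j}{i-j}-\binom{d-2j}{i-j-1}\bigr)$. The differences are only cosmetic: you place the ballot sets in the shifted window $\{2j+1,\dots,d\}$, so closure under dividing $u$ comes from weakening the shift rather than from the nesting $\mathcal{B}_n\subseteq\mathcal{B}_{n+1}$; and you prove the ballot count, the purity and the $g$--$\gamma$ identity directly instead of citing NPT11.
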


Clearly, Corollary \ref{skci-09}, Theorem \ref{thm:intro-lift} and 
Theorem \ref{gtheorem} together lead to a proof of Theorem  \ref{prob:brenti}. 
 
\begin{remark}
By definition, the $f$-vector of a simplicial convex polytope is an $O$-sequence.  In this case when   $\gamma(h)$ is a $f$-vector, Theorem \ref{thm:intro-lift} has appeared as \cite[Proposition 6.4]{NPT11}, see also  \cite[Proposition 8.2]{CoronFerroniLi2026}. Therefore Theorem \ref{thm:intro-lift} can be thought of as a  multicomplex analogue of \cite[Proposition 6.4]{NPT11}. 
\end{remark}

\subsection*{Acknowledgements}
The first author was  supported by the National Natural Science Foundation of China (No. 12371329) and the Fundamental Research Funds for the Central Universities (No. 63263094).

\subsection*{Declaration of AI usage}
During the development of this work, the authors used ChatGPT  as a research-assistance tool for exploring possible proof strategies. The resulting arguments were substantially  reconstructed, independently verified (and in particular significantly simplified), and written by the authors.  

\section{Proof of Theorem \ref{thm:intro-lift}}

In this section, we shall give a proof of  Theorem \ref{thm:intro-lift}, which, as explained in the introduction,  allows us to conclude  Theorem  \ref{prob:brenti}.

For $n\geq 0$, define a set of squarefree monomials
\[
 \B_n
 =\bigl\{x_{s_1}\cdots x_{s_k}:
 1\leq s_1<\cdots<s_k\leq n,\quad s_r\geq 2r
 \text{ for }1\leq r\leq k\bigr\}.
\]
The empty product is regarded as an element of $\B_n$. 

\begin{remark}
  The set $\B_n$ is essentially the same as the simplicial complex  defined based on a ballot path  in \cite[Section 6]{NPT11}. Construct a path of length $n$ in east steps $E=(1,0)$ and
north steps $N=(0,1)$, and place an $N$ in positions
$s_1<\cdots<s_k$.  The inequalities $s_r\geq 2r$ are equivalent to the
condition that  the path never goes above the diagonal $y=x$.  
\end{remark}

We collect some properties concerning $\B_n$ observed  in  \cite[Section 6]{NPT11}. 
\begin{itemize}
    \item $\B_n$ is a pure finite simplicial complex of degree $\floor{n/2}$. 

    \item For $0\leq i\leq \floor{n/2}$,  there is the following formula   \begin{equation}\label{eq:ballot-count-1}
 f_i(\B_n)=\binom{n}{i}-\binom{n}{i-1}, \ \ \text{where $\binom{n}{-1}=0$.}
\end{equation}

\item  The following  nesting relation is clear:
\begin{equation}\label{eq:nesting}
 \B_n\subseteq \B_{n+1}.
\end{equation}
\end{itemize}
The complex $\B_n$ will be called a  ballot complex.

 We can now provide  a proof of 
Theorem \ref{thm:intro-lift}.

\begin{proof}[Proof of Theorem \ref{thm:intro-lift}]
Since $\gamma(h)$ is an $O$-sequence, we may choose a finite multicomplex $M$ in variables $y_1,\ldots,y_q$ such that
\[
 f_j(M)=\gamma_j, \ \ 0\leq j\leq \floor{d/2}.
\]
Define
\begin{equation}\label{eq:lift}
 \Lcal_d(M)
 =\bigl\{uv:u\in M,\ v\in \B_{d-2\deg u}\bigr\},
\end{equation}
where the $x$-variables used in the ballot complexes are taken to be disjoint from
the $y$-variables used in $M$.  We remark that when $M$ is squarefree (namely, a simplicial complex), the construction  $\Lcal_d(M)$ has appeared in  \cite[Proposition 6.4]{NPT11}.

We proceed to   prove  that
\begin{itemize}
    \item The family $\Lcal_d(M)$  is a finite pure  multicomplex. 
\end{itemize}
Let us first verify that $\Lcal_d(M)$ is closed
under divisibility, and so is a multicomplex.
  Let
$uv\in\Lcal_d(M)$. Suppose  that $w$ divides $ uv$. Then we have a unique factorization $w=u'v'$, where $u'$ uses only the
$y$-variables and $v'$ uses only the $x$-variables. It is evident that $ u'\mid u$ and $ v'\mid v.$
Hence $u'\in M$, and 
$v'\in\B_{d-2\deg u}$.  This implies $ \deg u'\leq \deg u$, and thus $ d-2\deg u\leq d-2\deg u'$. 
In view of  the nesting relation in \eqref{eq:nesting}, we are  given that 
\[
 v'\in\B_{d-2\deg u'}.
\]
This justifies  that  $w=u'v'$ also belongs to $\Lcal_d(M)$.

We next check that $\Lcal_d(M)$ is  pure.  Take $uv\in\Lcal_d(M)$.  Since
$\B_{d-2\deg u}$ is a pure complex of degree $\floor{(d-2\deg u)/2}$, there is a monomial
$\overline v\in\B_{d-2\deg u}$ such that $v$ divides $\overline v$ with 
\[ 
 \deg\overline v=   \left\lfloor \frac{d-2\deg u}{2}\right\rfloor.
\]
Notice that  $uv\mid u\overline v\in\Lcal_d(M)$ and
\[
 \deg(u\overline v)
 =\deg u+\left\lfloor \frac{d-2\deg u}{2}\right\rfloor
 =\left\lfloor  d/2\right\rfloor.
\]
Note also that  every $u_0v_0\in\Lcal_d(M)$ satisfies
\[
 \deg(u_0v_0)
 \leq \deg u_0+\left\lfloor\frac{d-2\deg u_0}{2}\right\rfloor=\left\lfloor  d/2\right\rfloor.
\]
Hence every monomial  in $\Lcal_d(M)$ divides a monomial of degree $\left\lfloor  d/2\right\rfloor$, and no
monomial in $\Lcal_d(M)$ has larger degree than $\left\lfloor  d/2\right\rfloor$. So  $\Lcal_d(M)$ is a pure multicomplex of  degree $\left\lfloor  d/2\right\rfloor$.

We complete the proof by showing that   the $g$-vector is the  $O$-sequence of the pure multicomplex $\Lcal_d(M)$.
Recall that each  degree-$i$ monomial in $\Lcal_d(M)$ has a unique factorization
$uv$ with $u\in M$ and $  v\in \B_{d-2\deg u}$.  Using the counting formula in 
\eqref{eq:ballot-count-1} and the fact that $ f_j(M)=\gamma_j$, we obtain that for $0\leq i\leq \left\lfloor  d/2\right\rfloor$,
\begin{align*}
 f_i\bigl(\Lcal_d(M)\bigr)
 &=\sum_{j=0}^{i} f_j(M)f_{i-j}(\B_{d-2j})\\
 &=\sum_{j=0}^{i}\gamma_j
 \left(
   \binom{d-2j}{i-j}-\binom{d-2j}{i-j-1}
 \right).
\end{align*}
The following relation has been  observed in \cite[Observation~6.2]{NPT11}: 
\begin{equation}\label{eq:gamma-to-g}
 g_i=\sum_{j=0}^{i}\gamma_j
 \left(
   \binom{d-2j}{i-j}-\binom{d-2j}{i-j-1}
 \right), \ \ \ 
\text{$0\leq i\leq \floor{d/2}$}.
\end{equation}
Therefore we have  
$g_i=f_i(\Lcal_d(M))$ for $0\leq i\leq \left\lfloor  d/2\right\rfloor$, as desired.   
\end{proof}

\end{document}